\documentclass[
final
]{dmtcs-episciences}


\usepackage[utf8]{inputenc}
\usepackage{subfigure}

%

\usepackage[round]{natbib}

\usepackage{amsthm}
\usepackage{amsmath}
\usepackage{amsfonts}
\usepackage{amssymb}
\usepackage{epic}
\usepackage{epstopdf}
\DeclareGraphicsRule{.tif}{png}{.png}{`convert #1 `basename #1 .tif`.png}
\usepackage{tabularx}
\usepackage{array}
\usepackage{multirow}
\usepackage{longtable}
\usepackage{tikz}
\usepackage{hyperref}
\usepackage{array}
\newcolumntype{P}[1]{>{\raggedright\arraybackslash}p{#1}}
\newcolumntype{L}[1]{>{\raggedright\let\newline\\\arraybackslash\hspace{0pt}}m{#1}}
\newcolumntype{C}[1]{>{\centering\let\newline\\\arraybackslash\hspace{0pt}}m{#1}}
\newcolumntype{R}[1]{>{\raggedleft\let\newline\\\arraybackslash\hspace{0pt}}m{#1}}

\newtheorem{theorem}{Theorem}
\newtheorem{lemma}{Lemma}

\newtheorem{conjecture}{Conjecture}

\newtheorem{proposition}{Proposition}

\newcommand{\ignore}[1]{}

\author{Naiomi T. Cameron\affiliationmark{1}
  \and Kendra Killpatrick\affiliationmark{2}
}
\title{Statistics on Linear Chord Diagrams}
\affiliation{
  Spelman College, Atlanta, GA, USA\\
  Pepperdine University, Malibu, CA, USA}
\keywords{linear chord diagrams, matchings, noncrossing partitions, Narayana numbers, Eulerian triangle}
\received{2019-2-26}
\revised{2020-1-5}
\accepted{2020-1-7}
\begin{document}
\publicationdetails{21}{2019}{2}{11}{5211}
\maketitle
\begin{abstract}
  Linear chord diagrams are partitions of $\left[2n\right]$ into $n$ blocks of size two called chords. We refer to a block of the form $\{i,i+1\}$ as a short chord. In this paper, we study the distribution of the number of short chords on the set of linear chord diagrams, as a generalization of the Narayana distribution obtained when restricted to the set of noncrossing linear chord diagrams. We provide a combinatorial proof that this distribution is unimodal and has an expected value of one. We also study the number of pairs $(i,i+1)$ where $i$ is the minimal element of a chord and $i+1$ is the maximal element of a chord. We show that the distribution of this statistic on linear chord diagrams corresponds to the second-order Eulerian triangle and is log-concave.

\end{abstract}

\section{Introduction}\label{background}

Linear chord diagrams are partitions of $\left[2n\right]=\{1,2,...,2n\}$ into $n$ blocks of size two called {\em chords}. For a given chord $\{ a, b \}$  with $b>a$, we call $a$ the {\em startpoint}, $b$ the {\em endpoint} and $b-a$ the {\em length} of the chord.  Linear chord diagrams with $n$ chords are also called (perfect) matchings on $[2n]$, i.e. ways of connecting $2n$ points in the plane lying on a horizontal line by $n$ arcs, each arc connecting two of the points and lying above the points, as described by \cite{StaAdd}.

Two chords, $\{a_1, b_1\}$ and $\{a_2, b_2 \}$, are said to be {\em crossing} if $a_1 < a_2<b_1 < b_2$ and are said to be {\em nesting} if $a_1 < a_2<b_2 < b_1$. For example, when $n=2$ there are three linear chord diagrams: 

\begin{tabular}{p{1.5in}p{1.5in}p{1.5in}}

\begin{tikzpicture}[scale=0.75]
 \tikzstyle{black} = [circle, minimum width=2pt, fill, inner sep=0pt]
 \node[black] (n1) [label=below:$1$] at (0,0) {};
 \node[black] (n2) [label=below:$2$] at (1,0) {};
 \node[black] (n3) [label=below:$3$] at (2,0) {};
  \node[black] (n4) [label=below:$4$] at (3,0) {};
\draw (n1) to [out=90,in=90] (n2);
\draw (n3) to [out=90,in=90] (n4);
\end{tikzpicture}

&

\begin{tikzpicture}[scale=0.75]
 \tikzstyle{black} = [circle, minimum width=2pt, fill, inner sep=0pt]
 \node[black] (n1) [label=below:$1$] at (0,0) {};
 \node[black] (n2) [label=below:$2$] at (1,0) {};
 \node[black] (n3) [label=below:$3$] at (2,0) {};
  \node[black] (n4) [label=below:$4$] at (3,0) {};
  \draw (n1) to [out=90,in=90] (n3);
\draw (n2) to [out=90,in=90] (n4);
\end{tikzpicture}

&

\begin{tikzpicture}[scale=0.75]
 \tikzstyle{black} = [circle, minimum width=2pt, fill, inner sep=0pt]
 \node[black] (n1) [label=below:$1$] at (0,0) {};
 \node[black] (n2) [label=below:$2$] at (1,0) {};
 \node[black] (n3) [label=below:$3$] at (2,0) {};
  \node[black] (n4) [label=below:$4$] at (3,0) {};
  \draw (n1) to [out=90,in=90] (n4);
\draw (n2) to [out=90,in=90] (n3);
\end{tikzpicture}
\end{tabular}

\noindent The first of the three linear chord diagrams above consists of a pair of chords that are neither crossing nor nesting; the second diagram consists of exactly one crossing and the third diagram consists of exactly one nesting. 

When $n=3$, there are fifteen linear chord diagrams, all of which are shown in Figure \ref{linchordn3} below. The diagrams in the first row of Figure \ref{linchordn3} are noncrossing (i.e., contain no crossings), while the last diagram in the first row and the first four diagrams in the second row of Figure \ref{linchordn3} are nonnesting (i.e., contain no nestings). In general, the total number of linear chord diagrams having $n$ chords is $(2n-1)!!$. 


\begin{figure}[htbp]
\begin{center}
\begin{tabular}{p{0.8in}p{0.8in}p{0.8in}p{0.8in}p{0.8in}}

\begin{tikzpicture}[scale=0.35]
\tikzstyle{black} = [circle, minimum width=2pt, fill, inner sep=0pt]
\foreach \x in {1,...,6}
\node[black] (n\x) at (\x,0) {};
\draw (n1) to [out=90,in=90] (n6);
\draw (n2) to [out=90,in=90] (n5);
\draw (n3) to [out=90,in=90] (n4);
\end{tikzpicture}

&

\begin{tikzpicture}[scale=0.35]
\tikzstyle{black} = [circle, minimum width=2pt, fill, inner sep=0pt]
\foreach \x in {1,...,6}
\node[black] (n\x) at (\x,0) {};
\draw (n1) to [out=90,in=90] (n6);
\draw (n2) to [out=90,in=90] (n3);
\draw (n4) to [out=90,in=90] (n5);
\end{tikzpicture}

&

\begin{tikzpicture}[scale=0.35]
\tikzstyle{black} = [circle, minimum width=2pt, fill, inner sep=0pt]
\foreach \x in {1,...,6}
\node[black] (n\x) at (\x,0) {};
\draw (n1) to [out=90,in=90] (n4);
\draw (n2) to [out=90,in=90] (n3);
\draw (n5) to [out=90,in=90] (n6);
\end{tikzpicture}

&

\begin{tikzpicture}[scale=0.35]
\tikzstyle{black} = [circle, minimum width=2pt, fill, inner sep=0pt]
\foreach \x in {1,...,6}
\node[black] (n\x) at (\x,0) {};
\draw (n1) to [out=90,in=90] (n2);
\draw (n3) to [out=90,in=90] (n6);
\draw (n4) to [out=90,in=90] (n5);
\end{tikzpicture}

&

\begin{tikzpicture}[scale=0.35]
\tikzstyle{black} = [circle, minimum width=2pt, fill, inner sep=0pt]
\foreach \x in {1,...,6}
\node[black] (n\x) at (\x,0) {};
\draw (n1) to [out=90,in=90] (n2);
\draw (n3) to [out=90,in=90] (n4);
\draw (n5) to [out=90,in=90] (n6);
\end{tikzpicture}

\\


\begin{tikzpicture}[scale=0.35]
\tikzstyle{black} = [circle, minimum width=2pt, fill, inner sep=0pt]
\foreach \x in {1,...,6}
\node[black] (n\x) at (\x,0) {};
\draw (n1) to [out=90,in=90] (n4);
\draw (n2) to [out=90,in=90] (n5);
\draw (n3) to [out=90,in=90] (n6);
\end{tikzpicture}

&

\begin{tikzpicture}[scale=0.35]
\tikzstyle{black} = [circle, minimum width=2pt, fill, inner sep=0pt]
\foreach \x in {1,...,6}
\node[black] (n\x) at (\x,0) {};
\draw (n1) to [out=90,in=90] (n3);
\draw (n2) to [out=90,in=90] (n5);
\draw (n4) to [out=90,in=90] (n6);
\end{tikzpicture}

&

\begin{tikzpicture}[scale=0.35]
\tikzstyle{black} = [circle, minimum width=2pt, fill, inner sep=0pt]
\foreach \x in {1,...,6}
\node[black] (n\x) at (\x,0) {};
\draw (n1) to [out=90,in=90] (n3);
\draw (n2) to [out=90,in=90] (n4);
\draw (n5) to [out=90,in=90] (n6);
\end{tikzpicture}

&

\begin{tikzpicture}[scale=0.35]
\tikzstyle{black} = [circle, minimum width=2pt, fill, inner sep=0pt]
\foreach \x in {1,...,6}
\node[black] (n\x) at (\x,0) {};
\draw (n1) to [out=90,in=90] (n2);
\draw (n3) to [out=90,in=90] (n5);
\draw (n4) to [out=90,in=90] (n6);
\end{tikzpicture}

&

\begin{tikzpicture}[scale=0.35]
\tikzstyle{black} = [circle, minimum width=2pt, fill, inner sep=0pt]
\foreach \x in {1,...,6}
\node[black] (n\x) at (\x,0) {};
\draw (n1) to [out=90,in=90] (n5);
\draw (n2) to [out=90,in=90] (n6);
\draw (n3) to [out=90,in=90] (n4);
\end{tikzpicture}

\\


\begin{tikzpicture}[scale=0.35]
\tikzstyle{black} = [circle, minimum width=2pt, fill, inner sep=0pt]
\foreach \x in {1,...,6}
\node[black] (n\x) at (\x,0) {};
\draw (n1) to [out=90,in=90] (n4);
\draw (n2) to [out=90,in=90] (n6);
\draw (n3) to [out=90,in=90] (n5);
\end{tikzpicture}

&

\begin{tikzpicture}[scale=0.35]
\tikzstyle{black} = [circle, minimum width=2pt, fill, inner sep=0pt]
\foreach \x in {1,...,6}
\node[black] (n\x) at (\x,0) {};
\draw (n1) to [out=90,in=90] (n5);
\draw (n2) to [out=90,in=90] (n4);
\draw (n3) to [out=90,in=90] (n6);
\end{tikzpicture}

&
\begin{tikzpicture}[scale=0.35]
\tikzstyle{black} = [circle, minimum width=2pt, fill, inner sep=0pt]
\foreach \x in {1,...,6}
\node[black] (n\x) at (\x,0) {};
\draw (n1) to [out=90,in=90] (n3);
\draw (n2) to [out=90,in=90] (n6);
\draw (n4) to [out=90,in=90] (n5);
\end{tikzpicture}

&

\begin{tikzpicture}[scale=0.35]
\tikzstyle{black} = [circle, minimum width=2pt, fill, inner sep=0pt]
\foreach \x in {1,...,6}
\node[black] (n\x) at (\x,0) {};
\draw (n1) to [out=90,in=90] (n5);
\draw (n2) to [out=90,in=90] (n3);
\draw (n4) to [out=90,in=90] (n6);
\end{tikzpicture}

&

\begin{tikzpicture}[scale=0.35]
\tikzstyle{black} = [circle, minimum width=2pt, fill, inner sep=0pt]
\foreach \x in {1,...,6}
\node[black] (n\x) at (\x,0) {};
\draw (n1) to [out=90,in=90] (n6);
\draw (n2) to [out=90,in=90] (n4);
\draw (n3) to [out=90,in=90] (n5);
\end{tikzpicture}

\end{tabular} 
\end{center}
\caption{The set of all linear chord diagrams with three chords each of length at least one.}
\label{linchordn3}
\end{figure}

Sullivan gave a relationship for the diagonals of the array (\ref{Sultable}) below of linear chord diagrams with $n$ chords where all chords have length at least $k$. See \cite{Sul}. 

\begin{equation}
\begin{array}{c|ccccc}
n \backslash k & 1 & 2 & 3 & 4 & 5 \\\hline
1 & 1 &&&&\\ 
2 & 3 & 1 &&&\\
3 & 15 & 5 & 1 &&\\
4 & 105 & 36 & 10 & 1\\
5 & 945 & 329 & 99 & 20 & 1\\
\vdots &&&\cdots&&\\
\end{array}
\label{Sultable}
\end{equation}
We are interested in further refining this table by considering the number of chords of length $k$ for each linear chord diagram with $n$ chords.

In this paper, we will focus only on the case where $k=1$, i.e. those linear chord diagrams with $n$ chords whose shortest chord is of length at least $1$, classified by the number of chords of length $1$.  In other words, we will look at the set of all linear chord diagrams with $n$ chords (since all diagrams have a shortest chord of length at least $1$) by number of chords of length $1$. In this context, a short chord will be understood to be a chord of length one. Table (\ref{shortchordtriangle}) gives\ignore{the numbers $$L_{n,s}:=\left|\{\pi\in \mathcal{L}_{n}^{1}\  \vert \ sc(\pi)=s\}\right|,$$ i.e.,} the total number $L_{n,s}$ of linear chord diagrams with $n$ chords exactly $s$ of which are short (have length one):
\begin{equation}
\begin{array}{c|ccccccccc}
n \backslash s & 0 & 1 & 2 & 3 & 4 & 5 & 6 & 7 & 8\\\hline
0 & 1 &&&&&&&&\\
1 & 0 & 1 &&&&&&&\\ 
2 & 1 & 1 & 1 &&&&&&\\
3 & 5 & 6 & 3 & 1 &&&&&\\
4 & 36 & 41 & 21 & 6 & 1&&&&\\
5 & 329 & 365 & 185 & 55 & 10 & 1 &&&\\
6 & 3655 & 3984 & 2010 & 610 & 120 & 15 & 1&& \\
7 & 47844 & 51499 & 25914 & 7980 & 1645 & 231 & 21 & 1&\\
8 & 72135 & 769159 & 386407 & 120274 & 25585 & 3850 & 406 & 28 & 1\\
\vdots &&&\cdots&&&&&&\\
\end{array}
\label{shortchordtriangle}
\end{equation}

More generally, if we let $\mathcal{L}_{n}^k$ denote the set of all linear chord diagrams having $n$ chords, each of which has length at least $k$, then for $\pi\in  \mathcal{L}_{n}^k$, we refer to a chord of length $k$ in $\pi$ as a {\em short chord}. For $\pi\in \mathcal{L}_{n}^{k}$, define $sc(\pi)$ to be the number of short chords, i.e. the number of chords of length $k$, in $\pi$.  We want to consider the distribution of the number of short chords on $\mathcal{L}_{n}^{k}$, that is,
\begin{displaymath} L_{n}^{k}(q):=\sum_{\pi \in \mathcal{L}_{n}^{k}}{q^{sc(\pi)}}.\end{displaymath} See Figure \ref{Sullivantriangle}. Note that the numbers $L_{n,s}$ in the table (\ref{shortchordtriangle}) above are the coefficients of the polynomials $L_n^1(q)$ in the first column of the table in Figure \ref{Sullivantriangle}. The numbers $L_{n,s}$ are recorded as \href{https://oeis.org/A079267}{A079267} in \cite{OEIS}.

\begin{figure}
\centering
$\begin{array}{ >{\arraybackslash$} p{0.1in} <{$} | >{\arraybackslash$} P{1.5in} <{$}  >{\arraybackslash$} P{1.5in} <{$}  >{\arraybackslash$} p{1.5in} <{$} }
n & L_{n}^{1}(q) & {L}_{n}^{2}(q) & {L}_{n}^{3}(q) \\\hline
1 & q &  & \\ 
2 & 1+q+q^2 & q^2 &  \\
3 & 5+6q+3q^2+q^3 & 1+2q+2q^2 & q^3\\
4 & 36+41q+21q^2+6q^3+q^4 & 10 +14q+9q^2+2q^3+q^4 & 1+3q+4q^2+2q^3\\
\vdots &\vdots &&\\
\end{array}$
\caption{The distribution of the number of minimal length chords on $\mathcal{L}_{n}^{k}$ for $k=1,2,3,\dots$.}
\label{Sullivantriangle}
\end{figure}

It is known that the Catalan numbers $C_n=\frac{1}{n+1}\binom{2n}{n}$ count the number of noncrossing matchings on $[2n]$. The Narayana numbers, $N(n,k)= \frac{1}{k}\binom{n-1}{k-1}\binom{n}{k-1}$, count the number of noncrossing matchings on $[2n]$ with $k$ arcs of the form $(i,i+1)$ (i.e., $k$ chords of length $1$).  The Narayana numbers are given by the following table:
\begin{displaymath}
\begin{array}{c|ccccc}
n \backslash k & 1 & 2 & 3 & 4 & 5 \\\hline
1 & 1 &&&&\\ 
2 & 1 & 1 &&&\\
3 & 1 & 3 & 1 &&\\
4 & 1 & 6 & 6 & 1\\
5 & 1 & 10 & 20 & 10 & 1\\
\vdots &&&\cdots&&\\
\end{array}
\end{displaymath}
We may consider table (\ref{shortchordtriangle}) as a generalization of the Narayana numbers on the set of all matchings on $[2n]$. 

 An {\em LR pair} in a linear chord diagram is a pair of consecutive integers $(i, i+1)$ where $i$ is a startpoint of a chord and $i+1$ is an endpoint of a (possibly different) chord. The Narayana numbers also count the number of nonnesting matchings on $[2n]$ with $k$ LR pairs, which can be seen as follows.  We first make the well known observation that the Narayana numbers count the number of Dyck paths of length $2n$ with $k$ peaks, as described by \cite{Sta}. 
 Consider a Dyck path as a list of $n$ up steps $U$ and $n$ down steps $D$, such that at any step the number of preceding down steps never exceeds the number of preceding up steps. Label the $U$ steps of the list with the numbers $1$ through $n$, from left to right, and label the $D$ steps of the list with the numbers $1$ through $n$, from left to right. Draw a linear chord diagram with $n$ chords by connecting the $U$ step labeled $i$ with the $D$ step labeled $i$, for all $i=1,\dots,n$. The result is a unique nonnesting linear chord diagram with $n$ chords and $k$ LR-pairs.  

We obtain a different generalization of the Narayana numbers by considering LR pairs in all matchings on $[2n]$ (equivalently, linear chord diagrams with $n$ chords). Let $T_{n,k}$ denote the number of all linear chord diagrams with $n$ chords having exactly $k$ LR pairs. The following table gives the numbers $T_{n,k}$:
\begin{equation}
\begin{array}{c|cccccccc}
n \backslash k  & 1 & 2 & 3 & 4 & 5 & 6 & 7 & \\\hline
1 & 1 &&&&&&&\\ 
2 & 2 & 1 &&&&&&\\
3 & 6 & 8 & 1 &&&&&\\
4 & 24 & 58 & 22 & 1 &&&&\\
5 & 120 & 444 & 328 & 52 & 1 &&&\\
6 & 720 & 3708 & 4400 & 1452 & 114 & 1 && \\
7 & 5040 & 33984 & 58140 & 32120 & 5610 & 240 & 1 &\\
\vdots &&&\cdots&&&&&\\
\end{array}
\label{LRtriangle}
\end{equation}

In Section 2 of this paper, we prove combinatorially that the rows of table (\ref{shortchordtriangle}) are unimodal and conjecture that they are also log-concave.  We also give the exponential generating functions for the columns of this triangle. It is known that the expected number of short chords among the elements of $\mathcal{L}_n^1$ is one. This fact was first shown by \cite{Kre} and more recently extended by \cite{You}. However, in Section 2, we provide a combinatorial proof of this fact by providing a bijection between the total number of linear chord diagrams with $n$ chords and the total number of chords of length one among all linear chord diagrams with $n$ chords. In Section 3, we explore the triangle given by table (\ref{LRtriangle}). We connect table (\ref{LRtriangle}) to the second order Eulerian triangle and prove that the rows of table (\ref{LRtriangle}) are log-concave (and thus also unimodal).

\section{A Generalized Narayana Triangle}
We now explore several interesting properties of the generalized Narayana triangle given in table (\ref{shortchordtriangle}), i.e, the number of linear chord diagrams with $n$ chords, exactly $s$ of which have length $1$.  It is easy to see that the entries on the main diagonal of this triangle will always have the value 1.  Furthermore, the entries along the first sub-diagonal correspond to the triangular numbers $\binom{n}{2}$.

Now recall from Section \ref{background} that $L_{n,s}=\left|\{\pi\in \mathcal{L}_{n}^{1}\  \vert \ sc(\pi)=s\}\right|$ is the number of linear chord diagrams having $n$ chords exactly $s$ of which are chords of length one. We introduce the following shorthand, \begin{displaymath} {L}_n(q):=L_n^1(q)=\sum_{s=0}^n{L_{n,s} \ q^s   } , \end{displaymath}
and note that the coefficients of ${L}_n(q)$ are given by the rows of table (\ref{shortchordtriangle}).

\begin{theorem}
${L}_n(q)$ is unimodal.
\end{theorem}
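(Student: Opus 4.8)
The plan is to prove unimodality directly from the shape of the rows: the data in table~(\ref{shortchordtriangle}) suggest that every row peaks at $s=1$, so it suffices to produce two families of injections. Writing $\mathcal{L}_{n,s}:=\{\pi\in\mathcal{L}_n^1 : sc(\pi)=s\}$ so that $L_{n,s}=|\mathcal{L}_{n,s}|$, I would prove (i) $L_{n,0}\le L_{n,1}$ by an injection $\mathcal{L}_{n,0}\hookrightarrow\mathcal{L}_{n,1}$, and (ii) $L_{n,s+1}\le L_{n,s}$ for every $1\le s\le n-1$ by an injection $\Phi_s:\mathcal{L}_{n,s+1}\hookrightarrow\mathcal{L}_{n,s}$. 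Together these say that the sequence $L_{n,0},L_{n,1},\dots,L_{n,n}$ rises once (from $s=0$ to $s=1$) and is non-increasing thereafter, which is exactly unimodality.

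The heart of the argument is the ``unshortening'' map $\Phi_s$, which must destroy exactly one short chord. First I would record the key local arithmetic: crossing or nesting two short chords $\{a,a+1\}$ and $\{b,b+1\}$ produces two chords of length $\ge 2$ and hence removes two short chords at once, unless the two short chords are adjacent ($b=a+2$), in which case nesting them as $\{a,a+3\},\{a+1,a+2\}$ keeps $\{a+1,a+2\}$ short and removes exactly one. Since a diagram with $s+1\ge 2$ short chords need not contain two adjacent short chords, the generic move must instead recombine a short chord with a neighbouring non-short chord; this is possible whenever $1\le s\le n-2$, because then at least one non-short chord is present. (The remaining case $s=n-1$ is the trivial bound $L_{n,n}=1\le\binom n2=L_{n,n-1}$, since the only diagram with $n$ short chords is $\{1,2\},\{3,4\},\dots$.) Concretely, I would single out the leftmost short chord $\{a,a+1\}$ together with the maximal run of short chords it begins. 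If the run has length $\ge 2$ I apply the adjacent-nesting move to its first two chords; if the run has length $1$, I combine $\{a,a+1\}$ with the non-short chord meeting it at $a+2$, say by replacing $\{a,a+1\}$ and the chord through $a+2$ with $\{a,a+2\}$ together with the chord obtained by joining $a+1$ to the former partner of $a+2$. A short check shows each version yields chords of length $\ge 2$ apart from at most the one deliberately retained short chord, so the short-chord count drops by exactly one and $\Phi_s(\pi)\in\mathcal{L}_{n,s}$.

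The hard part will be injectivity, i.e. reconstructing $\pi$ from $\Phi_s(\pi)$. The canonical choices (leftmost short chord, leftmost maximal run, fixed recombination rule) are designed so that in the image one can recognize which two chords were altered and reverse the move, but verifying this requires handling several configurations: runs of several consecutive short chords, the boundary cases $a=1$ and the partner of $a+2$ lying to the left of $a$, and the danger that the two sub-rules (nesting a run versus absorbing a non-short neighbour) produce the same output from different inputs. I expect the bulk of the work to be a careful case analysis showing the inverse is well defined on the image and that the two sub-rules have disjoint images.

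The base inequality in (i) would be handled by a parallel but simpler injection that creates a single short chord at the left end while recording the displaced chord, subject to the same invertibility check. Finally I would note that injections of this purely local type yield unimodality but are not strong enough to compare $L_{n,s-1}L_{n,s+1}$ with $L_{n,s}^2$, which is consistent with log-concavity being left open as a conjecture.
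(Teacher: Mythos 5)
Your overall skeleton is the same as the paper's: show the peak sits at $s=1$ by exhibiting an injection $\mathcal{L}_{n,0}\hookrightarrow\mathcal{L}_{n,1}$ and injections $\mathcal{L}_{n,s+1}\hookrightarrow\mathcal{L}_{n,s}$ for $s\ge 1$. But the maps you propose are not the paper's, and the decisive step is missing. Your $\Phi_s$ is a local recombination with two sub-rules (nest the first two chords of a run of short chords, versus absorb the lone leftmost short chord into the non-short chord through $a+2$, itself splitting into two cases according to whether $a+2$ is a startpoint or an endpoint). The forward map is fine: in each case the short-chord count drops by exactly one. What you never establish is injectivity, and you say so yourself ("I expect the bulk of the work to be a careful case analysis"). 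That is precisely where the content of the theorem lies: you must show (a) that in the output one can locate which pair of chords was altered --- note that after sub-rule B the neighbourhood of position $a$ carries no short chord at all, so "leftmost short chord of the output" does not point back at the modification site --- and (b) that the images of the two sub-rules are disjoint, e.g.\ that a sub-rule B output cannot accidentally exhibit, at its own leftmost short chord, the signature configuration (a length-$3$ chord immediately covering a short chord) that sub-rule A produces. Without (a) and (b) the argument is a plan, not a proof.

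For contrast, the paper avoids this case analysis with a single global move: take the \emph{rightmost} short chord $\{i,i+1\}$ and unwrap it by sending its right endpoint to the front of the diagram, producing an initial chord $\{1,i\}$. Every remaining short chord then lies strictly inside this new first chord, so the image consists of diagrams whose short chords are all covered, and the inverse is immediate: read off the first chord $\{1,m\}$ and rewrap it into a short chord at position $m$. The base case $\mathcal{L}_{n,0}\to\mathcal{L}_{n,1}$ is the reverse operation on the first chord. If you want to salvage your local approach you would need to carry out the configuration analysis in full; otherwise I recommend replacing $\Phi_s$ with a move of this global type, which makes the preimage uniquely determined by inspection of the image.
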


\begin{proof}
Let $\mathcal{L}_{n,s}:=\{\pi\in \mathcal{L}_n^1 \vert\ sc(\pi)=s\}$ denote the set of linear chord diagrams having $n$ chords and exactly $s$ short chords (i.e., chords of length one), so that $\left|\mathcal{L}_{n,s}\right| =L_{n,s}.$ We can show that ${L}_n(q)$ is unimodal by establishing injective maps $\phi_1: \mathcal{L}_{n,0}\to \mathcal{L}_{n,1}$ and $\phi_j:\mathcal{L}_{n,j}\to \mathcal{L}_{n,j-1}$ for $j=2,\dots, n$.

Define $\phi_j: \mathcal{L}_{n,j}\to \mathcal{L}_{n,j-1}$ for $j\geq 2$ as follows.  Let $\pi \in \mathcal{L}_{n,j}$.  Then $\pi$ has $j$ short chords, where $j\geq 2$.  Take the rightmost short chord, say $\{i,i+1\}$, and unwrap this chord by sending the right endpoint to the beginning of the diagram, i.e., replacing it with the long chord $\{1,i\}$.  The result is a linear chord diagram with $j-1$ short chords where all short chords are covered.  $\phi_j$ is injective.  Define $\phi_1: \mathcal{L}_{n,0}\to \mathcal{L}_{n,1}$ as follows.  Let $\pi \in \mathcal{L}_{n,0}$.  Take the first (long) chord $\{1,i\}$ and turn it into the short chord $\{i,i+1\}$; the result is a chord diagram with exactly one short chord.  This map is injective but not surjective because it will not produce the chord diagrams in $\mathcal{L}_{n,1}$ that begin with a short chord.
\end{proof}

\begin{conjecture}
The coefficients of $L_{n}(q)$ form a log-concave sequence. 
\end{conjecture}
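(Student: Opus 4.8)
The plan is to find a clean recurrence for the rows of table (\ref{shortchordtriangle}) and then prove log-concavity by induction on $n$. First I would establish the recurrence by a simple insertion bijection: every $\pi\in\mathcal{L}_n^1$ is obtained from a unique $\sigma\in\mathcal{L}_{n-1}^1$ by prepending a new leftmost point, labeled $1$, and inserting its partner into one of the $2n-1$ available gaps among the old points. Inserting the partner into the leftmost gap creates the short chord $\{1,2\}$; inserting it into the interior of one of the $sc(\sigma)$ existing short chords destroys that short chord; every other insertion leaves $sc$ unchanged. Tracking these three cases gives, for $n\ge 2$ and all $s$,
\[
L_{n,s}=L_{n-1,s-1}+(2n-2-s)\,L_{n-1,s}+(s+1)\,L_{n-1,s+1},
\]
equivalently, in generating-polynomial form, $L_n(q)=(q+2n-2)\,L_{n-1}(q)+(1-q)\,L_{n-1}'(q)$. (One checks this against the table: e.g.\ the row $(5,6,3,1)$ maps to $(36,41,21,6,1)$.)

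With the recurrence in hand, the induction is set up as follows. The base facts are immediate: for $n\ge 2$ all entries $L_{n,0},\dots,L_{n,n}$ are strictly positive, so there are no internal zeros and log-concavity is equivalent to the Newton-type inequalities $L_{n,s}^2\ge L_{n,s-1}L_{n,s+1}$. Writing $a_s:=L_{n-1,s}$ and $c_s:=L_{n,s}=a_{s-1}+(2n-2-s)a_s+(s+1)a_{s+1}$, the inductive step reduces to showing that $c_s^2\ge c_{s-1}c_{s+1}$ follows from the hypothesis $a_i^2\ge a_{i-1}a_{i+1}$ for all $i$. The natural strategy is to expand $c_s^2-c_{s-1}c_{s+1}$ and regroup the resulting products $a_ia_j$ into a nonnegative combination of the "Newton differences'' $a_ia_j-a_{i+1}a_{j-1}$ (each $\ge 0$ for $i\ge j-1$ by the inductive log-concavity together with positivity), using that the tridiagonal transition matrix $T^{(n)}$ with entries $T_{s,s-1}=1$, $T_{s,s}=2n-2-s$, $T_{s,s+1}=s+1$ has nonnegative, appropriately ordered coefficients.

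I expect the regrouping step to be the main obstacle, for two reasons. Linear transformations do not preserve log-concavity in general, and the derivative term $(1-q)L_{n-1}'(q)$ carries a sign that can in principle break the inequality, so a naive term-by-term comparison will not close. Moreover, real-rootedness—which would give log-concavity for free via Newton's inequalities—is genuinely unavailable here, since already $L_2(q)=1+q+q^2$ has complex roots; thus the usual machinery of interlacing and Pólya frequency sequences cannot be invoked directly. To get a manifestly nonnegative grouping I anticipate having to strengthen the inductive hypothesis, the cleanest candidate being a total-positivity statement: prove that each tridiagonal matrix $T^{(n)}$ is totally positive and that total positivity of the array $(L_{n,s})$ is propagated by multiplication by $T^{(n)}$, which then forces row log-concavity. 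An alternative, purely combinatorial route would be to construct an explicit injection $\mathcal{L}_{n,s-1}\times\mathcal{L}_{n,s+1}\hookrightarrow\mathcal{L}_{n,s}\times\mathcal{L}_{n,s}$ that moves one short chord from the second diagram to the first in a canonical, reversible way; the difficulty there is exactly the usual one of making such a transfer well defined and injective across two diagrams on the same ground set, and I would expect the total-positivity approach to be the more tractable of the two.
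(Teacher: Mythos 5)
This statement is a \emph{conjecture} in the paper: the authors prove only unimodality of $L_n(q)$ (by an injection argument) and explicitly leave log-concavity open. Your proposal does not close it either, and you are candid about where it stalls. The recurrence you derive,
\[
L_{n,s}=L_{n-1,s-1}+(2n-2-s)\,L_{n-1,s}+(s+1)\,L_{n-1,s+1},
\]
is correct (it is the paper's recurrence (\ref{Lrecurrence}), attributed to the reference [KO]), and your generating-polynomial form $L_n(q)=(q+2n-2)L_{n-1}(q)+(1-q)L_{n-1}'(q)$ checks out against the table. But the entire content of the conjecture lives in the step you defer: showing that $c_s^2\ge c_{s-1}c_{s+1}$ follows from $a_i^2\ge a_{i-1}a_{i+1}$ under the map $a\mapsto T^{(n)}a$. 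You name two candidate strategies (a regrouping into nonnegative Newton differences, or a total-positivity propagation) but exhibit neither; as you yourself note, linear maps do not preserve log-concavity in general, so the nonnegativity of the entries of $T^{(n)}$ alone proves nothing, and the sign carried by the $(1-q)L_{n-1}'(q)$ term is exactly the obstruction. Until one of those two routes is actually carried out, there is no proof.

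It is worth noting why this is genuinely harder than the superficially similar log-concavity result the paper does prove. For the LR-pair triangle $T(n,k)$ the recurrence involves only $T(n-1,k)$ and $T(n-1,k-1)$ with coefficients linear in $n$ and $k$, so Kurtz's lemma (Lemma~\ref{kurtz}) applies off the shelf. Your recurrence for $L_{n,s}$ has the additional term $(s+1)L_{n-1,s+1}$, which places it outside the hypotheses of that lemma; this three-term structure is presumably why the authors state log-concavity of $L_n(q)$ only as a conjecture. Your observation that real-rootedness fails already for $L_2(q)=1+q+q^2$ is correct and usefully rules out the standard Newton's-inequalities shortcut, but it does not substitute for the missing inductive step. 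As it stands, your proposal is a reasonable research plan, not a proof, and the conjecture remains open after it.
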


\ignore{
\begin{proposition}
$$L_{n,1,n-2}=L_{n-1,1,n-3}+nL_{n-1,1,n-2}+(n-1)L_{n-1,1,n-1}$$
\end{proposition}

\begin{proof}
 $L_{n,1,n-2}$ is the number of linear chord diagrams with $n$ chords with exactly $n-2$ chords of length one (that is, exactly two of which have length more than one. We will refer to chords having length more than one as ``long.")

We will construct all chord diagrams with $n-2$ chords of length one from chord diagrams with $n-1$ chords.

To begin take all chord diagrams on $\{1,\dots,2n-2\}$ with $n-1$ chords having $n-3$ chords of length one and add a chord of length one $\{2n-1,2n\}$ to the end. This can be done in $L_{n-1,1,n-3}$ ways.

Now consider the set of linear chord diagrams on $\{1,\dots,2n-2\}$ with $n-1$ chords and $n-2$ chords of length one. We will add one new chord to each of these by doing the following. The end point of this new chord will be in the rightmost position at $2n$ and the start point of this new chord can be placed immediately to the left of the start point of any of the $n-1$ existing or immediately to the left of the end point of the one long chord. By adding this new chord, we have created a linear chord diagram on $1,\dots, 2n$ with $n$ chords and $n-2$ chords of length one, that ends with a long chord.

Finally consider the linear chord diagrams on $\{1,\dots, 2n-2\}$ with $n-1$ chords and $n-1$ chords of length one. We will add a new chord to each of these by doing the following. The end point of this new chord will be in the rightmost position at $2n$ and the start point of this new chord will be placed between the start point and end point of any chord of length one. The result will be a linear chord diagram with $n$ chords and $n-2$ chords of length one, that ends with a long chord. 

In the latter case, the last chord is both long and its start point is immediately to the left of an endpoint of a chord of length two. This is not true in the former case where the last chord is long and has a start point that is immediately to the left of another start point or is immediately to the left of an endpoint of a chord of length greater than two. Therefore, the two sets do not overlap.

\end{proof}
}

\begin{theorem}
The exponential generating function for the number $L_{n,s}$ of linear chord diagrams with $n$ chords, exactly $s$ of which are chords of length one, is \begin{displaymath}\frac{e^{(-1+\sqrt{1-2t})}}{\sqrt{1-2t}}\cdot \frac{(1-\sqrt{1-2t})^s}{s!}.\end{displaymath}
\end{theorem}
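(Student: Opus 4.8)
The plan is to encode the entire triangle $(L_{n,s})$ in one bivariate exponential generating function and reduce the claim to extracting a coefficient. Write $L_n(q)=\sum_s L_{n,s}q^s$ and $\Phi(t,q)=\sum_{n\ge0}L_n(q)\frac{t^n}{n!}=\sum_{n,s}L_{n,s}q^s\frac{t^n}{n!}$. Because $1-\sqrt{1-2t}$ has zero constant term, factoring $e^{(q-1)(1-\sqrt{1-2t})}=e^{-(1-\sqrt{1-2t})}e^{q(1-\sqrt{1-2t})}$ and expanding the last exponential shows that the coefficient of $q^s$ in $\frac{1}{\sqrt{1-2t}}e^{(q-1)(1-\sqrt{1-2t})}$ is exactly $\frac{e^{-1+\sqrt{1-2t}}}{\sqrt{1-2t}}\cdot\frac{(1-\sqrt{1-2t})^s}{s!}$. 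Thus it suffices to prove the single identity
\[
\Phi(t,q)=\frac{1}{\sqrt{1-2t}}\,e^{(q-1)(1-\sqrt{1-2t})},
\]
and the stated formula follows by reading off $[q^s]\Phi$.

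First I would produce a recurrence for $L_n(q)$ from a deletion bijection. For $\pi\in\mathcal L_n^1$ let $p$ be the partner of the point $1$; deleting the points $1$ and $p$ and relabeling the remaining $2n-2$ points in increasing order produces $\pi'\in\mathcal L_{n-1}^1$, and $\pi\mapsto(p,\pi')$ is a bijection onto $\{2,\dots,2n\}\times\mathcal L_{n-1}^1$. The combinatorial content is to track $sc$ under this map. If $p=2$ then $\{1,2\}$ is short and $sc(\pi)=sc(\pi')+1$. If $p\ge3$ then $\{1,p\}$ is long, no short chord of $\pi$ is destroyed (two consecutive points cannot be separated by the single deleted point $p$), and the only short chord that $\pi'$ can gain is the now-adjacent pair $\{p-1,p+1\}$, so $sc(\pi)=sc(\pi')-\big[\{p-1,p+1\}\in\pi\big]$; the boundary value $p=2n$ creates no new adjacency and falls under this formula with the bracket equal to $0$. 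Summing $q^{sc(\pi)}$ over $\pi$, the cases $p=2$ and $p=2n$ give $qL_{n-1}(q)$ and $L_{n-1}(q)$, while each interior $p\in\{3,\dots,2n-1\}$ corresponds to a distinct adjacent-pair location in $\pi'$ and contributes $L_{n-1}(q)$ minus a term that, summed over all such locations, equals $(1-q^{-1})\sum_{\pi'}sc(\pi')q^{sc(\pi')}$, since each $\pi'$ is counted once per short chord. Using $\sum_{\pi'}sc(\pi')q^{sc(\pi')}=qL_{n-1}'(q)$, this gives
\[
L_n(q)=(2n-2+q)\,L_{n-1}(q)-(q-1)\,L_{n-1}'(q),\qquad L_0(q)=1.
\]

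Next I would convert this into a partial differential equation for $\Phi$. Writing the recurrence with $n$ replaced by $n+1$ and summing against $\frac{t^n}{n!}$, the terms $2nL_n$, $qL_n$ and $-(q-1)L_n'$ become $2t\,\Phi_t$, $q\Phi$ and $-(q-1)\Phi_q$, giving the first-order linear equation
\[
(1-2t)\,\Phi_t+(q-1)\,\Phi_q=q\,\Phi,\qquad \Phi(0,q)=1,
\]
whose formal power series solution is unique, since the recurrence determines every $L_n(q)$. Finally I would verify that the proposed closed form solves it: with $u=\sqrt{1-2t}$, so $u_t=-1/u$, one computes $\Phi_t=(u^{-2}+(q-1)u^{-1})\Phi$ and $\Phi_q=(1-u)\Phi$, whence $(1-2t)\Phi_t+(q-1)\Phi_q=\big(1+(q-1)u+(q-1)(1-u)\big)\Phi=q\Phi$ and $\Phi(0,q)=1$. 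Extracting $[q^s]\Phi$ as in the first step then completes the proof.

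The step I expect to be the main obstacle is the short-chord bookkeeping in the recurrence: one must justify carefully that for $p\ge3$ the deletion neither destroys a short chord of $\pi$ nor creates any short chord other than the merged pair $\{p-1,p+1\}$, and treat the boundary value $p=2n$ correctly. Once this local analysis is secured, the identity $\sum_{\pi'}sc(\pi')q^{sc(\pi')}=qL_{n-1}'(q)$ packages all the corrections, and the remaining passage to the PDE and its verification are routine differentiation.
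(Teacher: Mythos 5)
Your proposal is correct, and its overall strategy coincides with the paper's: both work with the bivariate exponential generating function $\sum_n L_n(q)t^n/n!$, establish the closed form $\frac{1}{\sqrt{1-2t}}e^{(q-1)(1-\sqrt{1-2t})}$, and then extract the column generating function $L_s(t)$ (your expansion of $e^{q(1-\sqrt{1-2t})}$ and the paper's evaluation of $\frac{1}{s!}\partial_q^s\omega$ at $q=0$ are the same computation). The difference is that the paper imports both ingredients from the reference \cite{KO} --- the recurrence $L_{n,s}=L_{n-1,s-1}+(2n-2-s)L_{n-1,s}+(s+1)L_{n-1,s+1}$ and the closed form for $\omega(q,t)$ --- whereas you prove them: your polynomial recurrence $L_n(q)=(2n-2+q)L_{n-1}(q)-(q-1)L_{n-1}'(q)$ is exactly the generating-polynomial form of the cited recurrence, your deletion bijection (removing the chord containing the point $1$ and tracking how short chords are created or destroyed, including the boundary cases $p=2$ and $p=2n$) is a valid combinatorial derivation of it, and your passage to the first-order PDE $(1-2t)\Phi_t+(q-1)\Phi_q=q\Phi$ with $\Phi(0,q)=1$, together with the uniqueness of its formal power series solution and the direct verification of the closed form, is sound. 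So your write-up buys self-containedness at the cost of length; the mathematical route is the same.
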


\begin{proof}
First we note that, for $s\geq 0$, the numbers $L_{n,s}$ satisfy the recurrence
\begin{equation}
L_{n,s}=L_{n-1,s-1}+(2n-2-s)L_{n-1,s}+(s+1)L_{n-1,s+1}
\label{Lrecurrence}
\end{equation}
 with $L_{0,0}=1$ and $L_{n,s}=0$ for $s>n$.  A combinatorial proof of this recurrence is given in \cite{KO}.
\ignore{
\begin{proof}
  $L_{n,s}$ is the number of linear chord diagrams with $n$ chords with exactly $s$ chords of length one. We will refer to chords having length one as ``short" and chords having length more than one as ``long."
  
To begin take all chord diagrams on $\{1,\dots,2n-2\}$ with $n-1$ chords having $s-1$ short chords and add a short chord $\{2n-1,2n\}$ to the end. The result is a chord diagram on $\{1,\dots, 2n\}$ with $s$ short chords that ends with a short chord. This can be done in $L_{n-1,s-1}$ ways.
 
Now consider the set of linear chord diagrams with $n-1$ chords and $s$ short chords. We will add one long chord to each of these by doing the following. The end point of this new chord will be in the rightmost position and the start point of this new chord can be placed immediately to the left of the start point of any of the $n-1$ existing chords or immediately to the left of the end point of any of the $n-1-s$ long chords. By adding this new chord, we have created a linear chord diagram having $n$ chords, $s$ of which are short, and ending with a long chord. This can be done in $(n-1+n-1-s)L_{n-1,s}$ ways.

Finally consider the linear chord diagrams with $n-1$ chords and $s+1$ short chords. We will add a long chord to each of these by doing the following. The end point of this new chord will be in the rightmost position and the start point of this new chord will be placed between the start point and end point of any of the previously existing $s+1$ short chords, thereby reducing the number of short chords to $s$. The result will be a linear chord diagram with $n$ chords and $s$ short chords which ends with a long chord. This can be done in $(s+1)L_{n-1,s+1}$ ways.

Notice that the chord diagrams resulting from the process described in the last two paragraphs are exclusive. In the latter case, the last chord is both long and its start point is immediately to the left of an endpoint of a chord of length two. This is not true in the former case where the last chord is long and has a start point that is immediately to the left of another start point or is immediately to the left of an endpoint of a chord of length greater than two.

\end{proof}
}
We want to find the exponential generating function \begin{displaymath}L_s(t):=\sum_{n=0}^\infty{L_{n,s}\frac{t^n}{n!}}\end{displaymath} for the $s$th column of the triangle in (\ref{shortchordtriangle}). To do this, we extend the approach in \cite{KO} that was used to find the exponential generating function $L_0(t).$ Recalling this approach, we define \begin{displaymath}L_n(q):=L_{n,0}+L_{n,1}q+L_{n,2}q^2+\cdots+L_{n,n}q^n\end{displaymath} and note that \begin{displaymath}L_{n,s}=\frac{L_n^{(s)}(0)}{s!}.\end{displaymath} In \cite{KO}, the authors define the bivariate generating function \begin{displaymath}\omega(q,t)=\sum_{n=0}^\infty{L_n(q) \frac{t^n}{n!}}\end{displaymath} and use recurrence (\ref{Lrecurrence}) to show that \begin{displaymath}\omega(q,t)=\frac{e^{(1-q)(-1+\sqrt{1-2t})}}{\sqrt{1-2t}},\end{displaymath} thereby setting $q=0$ to obtain \begin{displaymath}L_0(t)=\frac{e^{(-1+\sqrt{1-2t})}}{\sqrt{1-2t}}\end{displaymath} as the exponential generating function for the first column of (\ref{shortchordtriangle}). 

We can extend this observation by noting that \begin{displaymath}L_s(t)=\sum_{n=0}^\infty{L_{n,s}\frac{t^n}{n!}}=  \sum_{n=0}^\infty{ \frac{L_n^{(s)}(0)}{s!} \frac{t^n}{n!} } = \left(\frac{1}{s!}  \left. \frac{\partial^s\omega}{\partial q^s}  \right\vert_{q=0}\right)\end{displaymath} and \begin{displaymath} \frac{1}{s!} \frac{\partial^s\omega}{\partial q^s}= \frac{e^{(1-q)(-1+\sqrt{1-2t})}}{\sqrt{1-2t}}\cdot \frac{(1-\sqrt{1-2t})^s}{s!}.\end{displaymath} Letting $q=0$ in the last equation we obtain \begin{displaymath}L_s(t)=\frac{e^{(-1+\sqrt{1-2t})}}{\sqrt{1-2t}}\cdot \frac{(1-\sqrt{1-2t})^s}{s!}.\end{displaymath}
\end{proof}

The last line of the preceding proof implies that the triangle in (\ref{shortchordtriangle}) is an {\em exponential Riordan array} with {\em initial function} $g=\frac{e^{(-1+\sqrt{1-2t})}}{\sqrt{1-2t}}$ and {\em multiplier function} $f=1-\sqrt{1-2t}.$ For reference, see \cite{Shap} or \cite{Bar}. Furthermore, we may use Riordan group algebra to count the total number of short chords among all linear chord diagrams with $n$ chords. To do this, we multiply triangle (\ref{shortchordtriangle}) by the infinite column vector $(0,1,2,3,\dots)^T$, using Riordan group multiplication. Since $te^t$ is the exponential generation function for the sequence $0, 1,2, 3, 4,\dots$, the Riordan multiplication proceeds as follows:
\begin{eqnarray*}
\left(\frac{e^{(-1+\sqrt{1-2t})}}{\sqrt{1-2t}}, 1-\sqrt{1-2t} \right) \ast t e^t &=& \frac{e^{(-1+\sqrt{1-2t})}}{\sqrt{1-2t}} \left(1-\sqrt{1-2t}\right) e^{1-\sqrt{1-2t}}\\
&=& \frac{1}{\sqrt{1-2t}}-1\\
&=& \sum_{n=0}^\infty{ (2n-1)!! \frac{t^n}{n!} },
\end{eqnarray*}
where $(2n-1)!!=1\cdot 3\cdot 5\cdot (2n-1)$ for $n\geq 1$ and $0$ otherwise. The result is that the total number of short chords among all linear chord diagrams with $n$ chords is the same as the total number of linear chord diagrams with $n$ chords. 

We now provide a bijective argument for the fact that the expected number of short chords among all linear chord diagrams with $n$ chords is one.
\begin{theorem}[\cite{Kre}]\label{shortbijection}
The total number of short chords among all linear chord diagrams with $n$ chords is equal to the number of linear chord diagrams with $n$ chords.
\end{theorem}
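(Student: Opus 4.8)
The plan is to realize the total number of short chords as the cardinality of the set $\mathcal{P}_n = \{(\pi, c) : \pi \in \mathcal{L}_n^1, \ c \text{ a short chord of } \pi\}$, since $\sum_{\pi} sc(\pi)$ counts exactly the pairs consisting of a diagram together with one of its short chords. I would then exhibit an explicit bijection between $\mathcal{P}_n$ and $\mathcal{L}_n^1$ itself, so that the equality $|\mathcal{P}_n| = |\mathcal{L}_n^1| = (2n-1)!!$ becomes manifest.

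Both sides will be reduced to the same intermediate object by a delete-and-record operation. On the one hand, given $(\pi, c) \in \mathcal{P}_n$ with $c = \{i, i+1\}$, I delete the two points $i$ and $i+1$, relabel the remaining $2n-2$ points in order by $1, \ldots, 2n-2$, and record the gap $g \in \{0, 1, \ldots, 2n-2\}$ into which $c$ had been inserted. This produces a diagram $\pi' \in \mathcal{L}_{n-1}^1$ together with one of $2n-1$ gap positions, and the process is reversible: inserting a short chord into gap $g$ of $\pi'$ recovers $(\pi, c)$ uniquely. On the other hand, given $\sigma \in \mathcal{L}_n^1$, I delete the chord $\{m, 2n\}$ whose endpoint is the maximal point $2n$, relabel the remaining $2n-2$ points, and record the original position $m \in \{1, \ldots, 2n-1\}$ of its startpoint; this is the standard recursive decomposition witnessing $(2n-1)!! = (2n-1)\cdot(2n-3)!!$, and it too is a bijection onto $\mathcal{L}_{n-1}^1$ together with one of $2n-1$ positions. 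Identifying the gap $g$ with the position $m$ via $g = m-1$ then glues the two decompositions into a single bijection $\Phi : \mathcal{L}_n^1 \to \mathcal{P}_n$.

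To carry this out rigorously I would first fix precise conventions for relabeling in order, for the $2n-1$ gaps of a $(2n-2)$-point diagram, and for insertion of a short chord at a prescribed gap, and then verify that deletion and insertion are mutually inverse on each side. The main obstacle is bookkeeping rather than conceptual: I must check that the $2n-1$ gaps arising from short-chord deletion correspond cleanly, under $g \leftrightarrow m-1$, to the $2n-1$ startpoint positions arising from deleting the chord at $2n$, with no off-by-one error at the two extreme positions (a short chord at the very front or very back of the diagram). A small case such as $n = 2$, where the three pairs of $\mathcal{P}_2$ match the three diagrams of $\mathcal{L}_2^1$, serves as a useful sanity check that the conventions line up. Once these are pinned down, invertibility of $\Phi$ is immediate, and since each of the $(2n-3)!!$ diagrams in $\mathcal{L}_{n-1}^1$ contributes exactly $2n-1$ elements to $\mathcal{P}_n$, we conclude $|\mathcal{P}_n| = (2n-1)!! = |\mathcal{L}_n^1|$, which is the desired statement.
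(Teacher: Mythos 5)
Your argument is correct, but it reaches the conclusion by a genuinely different route than the paper. The paper constructs a single explicit bijection on the full objects: a marked short chord $\{i,i+1\}$ of a diagram $D$ is sent to the diagram $D_s$ obtained by replacing $\{i,i+1\}$ with the chord $\{1,i+1\}$ and shifting the points $1,\dots,i-1$ one step to the right, so that the inverse simply reads off the first chord of any diagram. You instead factor both sides through the same intermediate set $\mathcal{L}_{n-1}^1\times\{1,\dots,2n-1\}$: on one side deleting the marked short chord and recording its gap, on the other deleting the chord ending at $2n$ and recording its startpoint (the standard witness of $(2n-1)!!=(2n-1)(2n-3)!!$). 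Both deletion maps are indeed inverse to the evident insertions, and your identification $g=m-1$ is consistent at the extreme positions, so the composite $\Phi$ is a bijection; it is worth noting that this composite is essentially the mirror image of the paper's map, replacing the marked short chord $\{i,i+1\}$ by the chord $\{i,2n\}$ rather than by $\{1,i+1\}$. What your route buys is that the count $(2n-1)!!$ and the recursive structure fall out automatically, and each half of the argument is a routine deletion/insertion check; what the paper's route buys is a self-contained, non-recursive description of the correspondence directly on $[2n]$, which is what Figure \ref{shortchordbijectionn=3} illustrates. The bookkeeping you flag is real but benign: there are exactly $2n-1$ gaps in a $(2n-2)$-point diagram and exactly $2n-1$ admissible startpoint positions for the chord ending at $2n$, so no off-by-one issue arises.
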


\begin{proof}
We will construct a bijection which maps each short chord $s$ in a linear chord diagram $D$ with $n$ chords to a unique linear chord diagram $D_s$ with $n$ chords.

To begin, identify a short chord $s$ in any linear chord diagram $D$ with $n$ chords.  If $s=\{1,2\}$, then $s$ will be mapped to the diagram $D$ that $s$ is a part of, that is, $D_s=D$.  If $s=\{i, i+1\}$ in $D$, where $i>1$, then $s$ will be mapped to the diagram $D_s$ that has the same connectivity as $D$, except that the chord $\{i, i+1\}$ in $D$ has been replaced with the chord $\{1,i+1\}$ and all start and endpoints between $1$ and $i-1$ have moved one position to the right. It should be clear that if $s_1$ and $s_2$ are two different short chords, either from the same or different diagrams, then $D_{s_1}\neq D_{s_2}$, and hence this map is injective.

To invert this map, we do the following. Take a linear chord diagram $D$ and consider its first chord $\{1,i+1\}$. If $i=1$, associate $D$ with its first chord $s=\{1,2\}$. If $i>1$, create a new diagram by removing $\{1,i+1\}$ from $D$, shifting all the start and endpoints $2,\dots,i$ of $D$ one position to the left and inserting the chord $s=\{i,i+1\}$. Associate $D$ with the short chord $s=\{i,i+1\}$ from this new diagram. It should be clear from the description of this inverse map that if $D_1$ and $D_2$ are two different linear chord diagrams, then the short chord $s_1$ associated with $D_1$ will be different from the short chord $s_2$ associated with $D_2$. 

See Figure \ref{shortchordbijectionn=3} for a depiction of this bijection when $n=3$.  

Hence, we have a bijection between the set of short chords among all linear chord diagrams with $n$ chords and the set of linear chord diagrams with $n$ chords.
\end{proof}


\begin{figure}
\begin{center}
\begin{tabular}{|p{0.8in}|p{0.8in}|p{0.8in}|p{0.8in}|p{0.8in}|}\hline
\begin{tikzpicture}[scale=0.35]
\tikzstyle{black} = [circle, minimum width=2pt, fill, inner sep=0pt]
\foreach \x in {1,...,6}
\node[black] (n\x) at (\x,0) {};
\draw (n1) to [out=90,in=90] (n6);
\draw (n2) to [out=90,in=90] (n5);
\draw[ultra thick] (n3) to [out=90,in=90] (n4);
\end{tikzpicture}

&

\begin{tikzpicture}[scale=0.35]
\tikzstyle{black} = [circle, minimum width=2pt, fill, inner sep=0pt]
\foreach \x in {1,...,6}
\node[black] (n\x) at (\x,0) {};
\draw (n1) to [out=90,in=90] (n6);
\draw[ultra thick] (n2) to [out=90,in=90] (n3);
\draw (n4) to [out=90,in=90] (n5);
\end{tikzpicture}

&

\begin{tikzpicture}[scale=0.35]
\tikzstyle{black} = [circle, minimum width=2pt, fill, inner sep=0pt]
\foreach \x in {1,...,6}
\node[black] (n\x) at (\x,0) {};
\draw (n1) to [out=90,in=90] (n6);
\draw (n2) to [out=90,in=90] (n3);
\draw[ultra thick] (n4) to [out=90,in=90] (n5);
\end{tikzpicture}
&

\begin{tikzpicture}[scale=0.35]
\tikzstyle{black} = [circle, minimum width=2pt, fill, inner sep=0pt]
\foreach \x in {1,...,6}
\node[black] (n\x) at (\x,0) {};
\draw (n1) to [out=90,in=90] (n4);
\draw[ultra thick] (n2) to [out=90,in=90] (n3);
\draw (n5) to [out=90,in=90] (n6);
\end{tikzpicture}

&

\begin{tikzpicture}[scale=0.35]
\tikzstyle{black} = [circle, minimum width=2pt, fill, inner sep=0pt]
\foreach \x in {1,...,6}
\node[black] (n\x) at (\x,0) {};
\draw (n1) to [out=90,in=90] (n4);
\draw (n2) to [out=90,in=90] (n3);
\draw[ultra thick] (n5) to [out=90,in=90] (n6);
\end{tikzpicture}
\\


\begin{tikzpicture}[scale=0.35]
\tikzstyle{black} = [circle, minimum width=2pt, fill, inner sep=0pt]
\foreach \x in {1,...,6}
\node[black] (n\x) at (\x,0) {};
\draw (n1) to [out=90,in=90] (n4);
\draw (n2) to [out=90,in=90] (n6);
\draw (n3) to [out=90,in=90] (n5);
\end{tikzpicture}
&

\begin{tikzpicture}[scale=0.35]
\tikzstyle{black} = [circle, minimum width=2pt, fill, inner sep=0pt]
\foreach \x in {1,...,6}
\node[black] (n\x) at (\x,0) {};
\draw (n1) to [out=90,in=90] (n3);
\draw (n2) to [out=90,in=90] (n6);
\draw (n4) to [out=90,in=90] (n5);
\end{tikzpicture}
&

\begin{tikzpicture}[scale=0.35]
\tikzstyle{black} = [circle, minimum width=2pt, fill, inner sep=0pt]
\foreach \x in {1,...,6}
\node[black] (n\x) at (\x,0) {};
\draw (n1) to [out=90,in=90] (n5);
\draw (n2) to [out=90,in=90] (n6);
\draw (n3) to [out=90,in=90] (n4);
\end{tikzpicture}
&

\begin{tikzpicture}[scale=0.35]
\tikzstyle{black} = [circle, minimum width=2pt, fill, inner sep=0pt]
\foreach \x in {1,...,6}
\node[black] (n\x) at (\x,0) {};
\draw (n1) to [out=90,in=90] (n3);
\draw (n2) to [out=90,in=90] (n4);
\draw (n5) to [out=90,in=90] (n6);
\end{tikzpicture}
&

\begin{tikzpicture}[scale=0.35]
\tikzstyle{black} = [circle, minimum width=2pt, fill, inner sep=0pt]
\foreach \x in {1,...,6}
\node[black] (n\x) at (\x,0) {};
\draw (n1) to [out=90,in=90] (n6);
\draw (n2) to [out=90,in=90] (n5);
\draw (n3) to [out=90,in=90] (n4);
\end{tikzpicture}

\\\hline


\begin{tikzpicture}[scale=0.35]
\tikzstyle{black} = [circle, minimum width=2pt, fill, inner sep=0pt]
\foreach \x in {1,...,6}
\node[black] (n\x) at (\x,0) {};
\draw[ultra thick] (n1) to [out=90,in=90] (n2);
\draw (n3) to [out=90,in=90] (n6);
\draw (n4) to [out=90,in=90] (n5);
\end{tikzpicture}
&

\begin{tikzpicture}[scale=0.35]
\tikzstyle{black} = [circle, minimum width=2pt, fill, inner sep=0pt]
\foreach \x in {1,...,6}
\node[black] (n\x) at (\x,0) {};
\draw (n1) to [out=90,in=90] (n2);
\draw (n3) to [out=90,in=90] (n6);
\draw[ultra thick] (n4) to [out=90,in=90] (n5);
\end{tikzpicture}
&

\begin{tikzpicture}[scale=0.35]
\tikzstyle{black} = [circle, minimum width=2pt, fill, inner sep=0pt]
\foreach \x in {1,...,6}
\node[black] (n\x) at (\x,0) {};
\draw[ultra thick] (n1) to [out=90,in=90] (n2);
\draw (n3) to [out=90,in=90] (n4);
\draw (n5) to [out=90,in=90] (n6);
\end{tikzpicture}
&

\begin{tikzpicture}[scale=0.35]
\tikzstyle{black} = [circle, minimum width=2pt, fill, inner sep=0pt]
\foreach \x in {1,...,6}
\node[black] (n\x) at (\x,0) {};
\draw (n1) to [out=90,in=90] (n2);
\draw[ultra thick] (n3) to [out=90,in=90] (n4);
\draw (n5) to [out=90,in=90] (n6);
\end{tikzpicture}
&

\begin{tikzpicture}[scale=0.35]
\tikzstyle{black} = [circle, minimum width=2pt, fill, inner sep=0pt]
\foreach \x in {1,...,6}
\node[black] (n\x) at (\x,0) {};
\draw (n1) to [out=90,in=90] (n2);
\draw (n3) to [out=90,in=90] (n4);
\draw[ultra thick] (n5) to [out=90,in=90] (n6);
\end{tikzpicture}
\\


\begin{tikzpicture}[scale=0.35]
\tikzstyle{black} = [circle, minimum width=2pt, fill, inner sep=0pt]
\foreach \x in {1,...,6}
\node[black] (n\x) at (\x,0) {};
\draw (n1) to [out=90,in=90] (n2);
\draw (n3) to [out=90,in=90] (n6);
\draw (n4) to [out=90,in=90] (n5);
\end{tikzpicture}
&

\begin{tikzpicture}[scale=0.35]
\tikzstyle{black} = [circle, minimum width=2pt, fill, inner sep=0pt]
\foreach \x in {1,...,6}
\node[black] (n\x) at (\x,0) {};
\draw (n1) to [out=90,in=90] (n5);
\draw (n2) to [out=90,in=90] (n3);
\draw (n4) to [out=90,in=90] (n6);
\end{tikzpicture}
&

\begin{tikzpicture}[scale=0.35]
\tikzstyle{black} = [circle, minimum width=2pt, fill, inner sep=0pt]
\foreach \x in {1,...,6}
\node[black] (n\x) at (\x,0) {};
\draw (n1) to [out=90,in=90] (n2);
\draw (n3) to [out=90,in=90] (n4);
\draw (n5) to [out=90,in=90] (n6);
\end{tikzpicture}
&

\begin{tikzpicture}[scale=0.35]
\tikzstyle{black} = [circle, minimum width=2pt, fill, inner sep=0pt]
\foreach \x in {1,...,6}
\node[black] (n\x) at (\x,0) {};
\draw (n1) to [out=90,in=90] (n4);
\draw (n2) to [out=90,in=90] (n3);
\draw (n5) to [out=90,in=90] (n6);
\end{tikzpicture}
&

\begin{tikzpicture}[scale=0.35]
\tikzstyle{black} = [circle, minimum width=2pt, fill, inner sep=0pt]
\foreach \x in {1,...,6}
\node[black] (n\x) at (\x,0) {};
\draw (n1) to [out=90,in=90] (n6);
\draw (n2) to [out=90,in=90] (n3);
\draw (n4) to [out=90,in=90] (n5);
\end{tikzpicture}

\\\hline


\begin{tikzpicture}[scale=0.35]
\tikzstyle{black} = [circle, minimum width=2pt, fill, inner sep=0pt]
\foreach \x in {1,...,6}
\node[black] (n\x) at (\x,0) {};
\draw (n1) to [out=90,in=90] (n3);
\draw (n2) to [out=90,in=90] (n4);
\draw[ultra thick] (n5) to [out=90,in=90] (n6);
\end{tikzpicture}
&

\begin{tikzpicture}[scale=0.35]
\tikzstyle{black} = [circle, minimum width=2pt, fill, inner sep=0pt]
\foreach \x in {1,...,6}
\node[black] (n\x) at (\x,0) {};
\draw[ultra thick] (n1) to [out=90,in=90] (n2);
\draw (n3) to [out=90,in=90] (n5);
\draw (n4) to [out=90,in=90] (n6);
\end{tikzpicture}
&

\begin{tikzpicture}[scale=0.35]
\tikzstyle{black} = [circle, minimum width=2pt, fill, inner sep=0pt]
\foreach \x in {1,...,6}
\node[black] (n\x) at (\x,0) {};
\draw (n1) to [out=90,in=90] (n5);
\draw (n2) to [out=90,in=90] (n6);
\draw[ultra thick] (n3) to [out=90,in=90] (n4);
\end{tikzpicture}
&

\begin{tikzpicture}[scale=0.35]
\tikzstyle{black} = [circle, minimum width=2pt, fill, inner sep=0pt]
\foreach \x in {1,...,6}
\node[black] (n\x) at (\x,0) {};
\draw (n1) to [out=90,in=90] (n3);
\draw (n2) to [out=90,in=90] (n6);
\draw[ultra thick] (n4) to [out=90,in=90] (n5);
\end{tikzpicture}

&

\begin{tikzpicture}[scale=0.35]
\tikzstyle{black} = [circle, minimum width=2pt, fill, inner sep=0pt]
\foreach \x in {1,...,6}
\node[black] (n\x) at (\x,0) {};
\draw (n1) to [out=90,in=90] (n5);
\draw[ultra thick] (n2) to [out=90,in=90] (n3);
\draw (n4) to [out=90,in=90] (n6);
\end{tikzpicture}
\\


\begin{tikzpicture}[scale=0.35]
\tikzstyle{black} = [circle, minimum width=2pt, fill, inner sep=0pt]
\foreach \x in {1,...,6}
\node[black] (n\x) at (\x,0) {};
\draw (n1) to [out=90,in=90] (n6);
\draw (n2) to [out=90,in=90] (n4);
\draw (n3) to [out=90,in=90] (n5);
\end{tikzpicture}

&

\begin{tikzpicture}[scale=0.35]
\tikzstyle{black} = [circle, minimum width=2pt, fill, inner sep=0pt]
\foreach \x in {1,...,6}
\node[black] (n\x) at (\x,0) {};
\draw (n1) to [out=90,in=90] (n2);
\draw (n3) to [out=90,in=90] (n5);
\draw (n4) to [out=90,in=90] (n6);
\end{tikzpicture}

&

\begin{tikzpicture}[scale=0.35]
\tikzstyle{black} = [circle, minimum width=2pt, fill, inner sep=0pt]
\foreach \x in {1,...,6}
\node[black] (n\x) at (\x,0) {};
\draw (n1) to [out=90,in=90] (n4);
\draw (n2) to [out=90,in=90] (n5);
\draw (n3) to [out=90,in=90] (n6);
\end{tikzpicture}

&

\begin{tikzpicture}[scale=0.35]
\tikzstyle{black} = [circle, minimum width=2pt, fill, inner sep=0pt]
\foreach \x in {1,...,6}
\node[black] (n\x) at (\x,0) {};
\draw (n1) to [out=90,in=90] (n5);
\draw (n2) to [out=90,in=90] (n4);
\draw (n3) to [out=90,in=90] (n6);
\end{tikzpicture}

&
\begin{tikzpicture}[scale=0.35]
\tikzstyle{black} = [circle, minimum width=2pt, fill, inner sep=0pt]
\foreach \x in {1,...,6}
\node[black] (n\x) at (\x,0) {};
\draw (n1) to [out=90,in=90] (n3);
\draw (n2) to [out=90,in=90] (n5);
\draw (n4) to [out=90,in=90] (n6);
\end{tikzpicture}

\\\hline

\end{tabular} 
\end{center}
\caption{An illustration of the bijective proof of Theorem \ref{shortbijection} in the case where $n=3$. For each short chord $s$ (highlighted in bold) above, see its corresponding linear chord diagram $D_s$ pictured below.}
\label{shortchordbijectionn=3}
\end{figure}

\section{The Second-Order Eulerian Triangle}

In this section, we explore the second-order Eulerian triangle, which can be thought of as another generalization of the Narayana triangle.  The second-order Eulerian triangle is given below, where entry $E(n,k)$ is known to count the number of permutations of the multiset $\{1,1,2,2,\dots,n,n\}$ with $k$ ascents such that between any two copies of $m$ there are only numbers less than $m$.  
\begin{displaymath}
\begin{array}{c|cccccccc}
n \backslash k & 0 & 1 & 2 & 3 & 4 & 5 & 6 &  \\\hline
1 & 1 &&&&&&&\\ 
2 & 1 & 2 &&&&&&\\
3 & 1 & 8 & 6 &&&&&\\
4 & 1 & 22 & 58 & 24 &&&&\\
5 & 1 & 52 & 328 & 444 & 120 &&&\\
6 & 1 & 114 & 1452 & 4400 & 3708 & 720 && \\
7 & 1 & 240 & 5610 & 32120 & 58140 & 33984 & 5040 &\\
\vdots &&&\cdots&&&&&\\
\end{array}
\end{displaymath}
This triangle is recorded as \href{https://oeis.org/A008517}{A008517} \cite{OEIS}. The second-order Eulerian numbers are known to satisfy the recurrence:
\begin{equation}
E(n,k) = (k+1)E(n-1,k) + (2n-k-1)E(n-1,k-1)
\label{2ndordereulerian}
\end{equation}
where $E(n,0)=1$. This can be seen by the following argument.

One can form all permutations of the multiset $\{1,1,2,2,\dots,n,n\}$ with $k$ ascents in which between any two copies of $m$ there are only numbers less than $m$ by taking all such permutations of the multiset $\{1,1,2,2,\dots,n-1,n-1\}$ with $k$ ascents and first replacing all numbers $m=1,2,\dots, n-1$ with $m+1$.  Then insert the pair $1 \ 1$ at the end of the permutation or between any ascent, which can be done in $k+1$ ways.  One can also form such a permutation by taking any such permutation of the multiset $\{1,1,2,2,\dots,n-1,n-1\}$ with $k-1$ ascents and inserting the pair $1 \ 1$ at any place which is not an ascent, which can be done in $2n-k-1$ ways.  

The row reversal of the second-order Eulerian triangle gives the triangle

\begin{equation}
\begin{array}{c|cccccccc}
n \backslash k  & 1 & 2 & 3 & 4 & 5 & 6 & 7 & \\\hline
1 & 1 &&&&&&&\\ 
2 & 2 & 1 &&&&&&\\
3 & 6 & 8 & 1 &&&&&\\
4 & 24 & 58 & 22 & 1 &&&&\\
5 & 120 & 444 & 328 & 52 & 1 &&&\\
6 & 720 & 3708 & 4400 & 1452 & 114 & 1 && \\
7 & 5040 & 33984 & 58140 & 32120 & 5610 & 240 & 1 &\\
\vdots &&&\cdots&&&&&\\
\end{array}
\label{eulerianLR}
\end{equation}
whose entries we will denote by $T(n,k)$. Note we have re-indexed the columns to initialize with $k=1.$

\begin{proposition}
The entries $T(n,k)$ of the row reversed second-order Eulerian triangle (\ref{eulerianLR}), which satisfy \begin{equation}T(n,k) = (n-k+1) T(n-1,k-1) + (n-1+k) T(n-1,k)\label{revEulerianrecurrence} \end{equation} with $n,k \geq 1$ and $T(n,1)=n!$, count the number of linear chord diagrams with $n$ chords and $k$ LR pairs.  
\end{proposition}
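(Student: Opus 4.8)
The plan is to prove the two assertions of the proposition separately: first that the row reversal $T(n,k)=E(n,n-k)$ satisfies recurrence (\ref{revEulerianrecurrence}) with $T(n,1)=n!$, and then that the count $T_{n,k}$ of linear chord diagrams with $n$ chords and $k$ LR pairs satisfies the \emph{same} recurrence with the same initial data, so that $T_{n,k}=T(n,k)$ by induction. For the first assertion I would simply substitute $j=n-k$ into the known recurrence (\ref{2ndordereulerian}). Writing $T(n,k)=E(n,n-k)$ and using $E(n-1,n-k)=T(n-1,k-1)$ together with $E(n-1,n-k-1)=T(n-1,k)$ turns (\ref{2ndordereulerian}) into $T(n,k)=(n-k+1)T(n-1,k-1)+(n+k-1)T(n-1,k)$, which is (\ref{revEulerianrecurrence}); taking $k=n-1$ in (\ref{2ndordereulerian}) gives $E(n,n-1)=n\,E(n-1,n-2)$, hence $T(n,1)=E(n,n-1)=n!$.

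The heart of the argument is the combinatorial recurrence for $T_{n,k}$. The key point is that whether a position is a startpoint or an endpoint is all that matters for LR pairs, so I encode each diagram $D$ on $[2n]$ by its type word $w(D)\in\{L,R\}^{2n}$, placing an $L$ at each startpoint and an $R$ at each endpoint; then the number of LR pairs of $D$ is exactly the number of $LR$ factors of $w(D)$. I would then use a deletion/insertion bijection between diagrams on $[2n]$ and pairs (diagram on $[2n-2]$, a choice of gap): every diagram on $[2n]$ contains a unique chord $(p,2n)$ ending at the rightmost point, and deleting this chord and relabeling produces a diagram $D'$ on $[2n-2]$ together with the gap formerly occupied by $p$, of which there are $2n-1$. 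Conversely one inserts a new startpoint into any gap and appends the new endpoint at the far right, which always yields a valid diagram.

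It then remains to track how the LR count changes under this insertion. Write $w(D')=a_1\cdots a_{2n-2}$; since this word ends in $R$, appending the new $R$ creates no $LR$ factor except in the last gap, where the inserted startpoint sits immediately before it, forming the short chord $(2n-1,2n)$ and adding exactly one LR pair. Inserting the new $L$ between $a_i$ and $a_{i+1}$ raises the LR count by one exactly when $a_i=a_{i+1}=R$ and leaves it unchanged otherwise. Now if $D'$ has $k'$ LR pairs, then $w(D')$ has $k'$ maximal blocks of $R$'s (one $LR$ factor opens each block), so its number of $RR$ factors equals $(n-1)-k'$; together with the special last gap this gives $n-k'$ gaps that raise the LR count by one and $n-1+k'$ gaps that preserve it. Summing over sources with $k'=k-1$ and $k'=k$ yields $T_{n,k}=(n-k+1)T_{n-1,k-1}+(n-1+k)T_{n-1,k}$, matching (\ref{revEulerianrecurrence}). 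With the trivial base case $T_{1,1}=1$, coming from the single diagram $(1,2)$, this forces $T_{n,1}=n!$ and, by induction, $T_{n,k}=T(n,k)$.

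I expect the main obstacle to be the bookkeeping in the previous paragraph, and in particular the identity that the number of $RR$ factors of $w(D')$ is $(n-1)-k'$: this is precisely what converts the gap classification into the exact coefficients $n-k'$ and $n-1+k'$ and makes the chord-diagram recurrence coincide with the reversed second-order Eulerian recurrence. I would verify the two boundary gaps explicitly, namely the gap before position $1$ (where the new adjacency is $(L,a_1)=(L,L)$, so no LR pair is created) and the final short-chord gap (the single source of the $+1$), since these behave slightly differently from a generic interior gap.
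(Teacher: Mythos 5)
Your argument is correct and is essentially the paper's proof viewed in a mirror: the paper fixes the new chord's startpoint at position $1$ and classifies where its endpoint may land (after a startpoint not in an LR pair for the $+1$ case, after an endpoint or an LR-paired startpoint otherwise), which is exactly the left--right reflection of your insertion of a startpoint into a gap with the endpoint appended at $2n$. Your write-up makes the deletion inverse and the $RR$-factor count $(n-1)-k'$ more explicit than the paper does, but the decomposition and the coefficients $n-k+1$ and $n-1+k$ arise in the same way.
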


\begin{proof}
The fact that $T(n,k)$ satisfies recurrence (\ref{revEulerianrecurrence}) follows directly from recurrence (\ref{2ndordereulerian}) for the second-order Eulerian triangle. We will show that the number of linear chord diagrams with $n$ chords and $k$ LR pairs also satisfies recurrence (\ref{revEulerianrecurrence}).

To form a linear chord diagram with $n$ chords and $k$ LR pairs, start with a linear chord diagram with $n-1$ chords and $k-1$ LR pairs.  Place a new start point for a chord at the beginning of the diagram and place the end point for the chord after any start point that isn't in an LR pair.  This can be done in $n-(k-1) = n-k+1$ ways.  One can also form a linear chord diagram with $n$ chords and $k$ LR pairs by starting with a linear chord diagram with $n-1$ chords and $k$ LR pairs.  Place a new start point for a chord at the beginning of the diagram and place the end point for the chord after any end point or after any start point that is in an LR pair.  This can be done in $(n-1)+k$ ways.
\end{proof}

Thus we can consider the row-reversed second-order Eulerian triangle to be a generalization of the Narayana triangle for all linear chords diagrams (not just those that are non-nesting, which are those counted by the Narayana triangle).  Since the generalized Narayana triangle studied in Section 2 has entries that we have proven to be unimodal and conjecture to be log-concave, one might wonder if those properties hold for this second generalization of the Narayana triangle as well.  

Using the following lemma due to \cite{Ku}, we may conclude that the coefficients in each row of the (row-reversed) second-order Eulerian triangle form a log-concave, and therefore unimodal, sequence.

\begin{lemma}[\cite{Ku}]\label{kurtz}
Suppose $\displaystyle\sum_{k=0}^n{R(n,k)q^k}$ is a polynomial for which $R(n,k)$ satisfies the recurrence relation 

\begin{eqnarray*}
 R(n,k) & = & (a_1n+a_2k+a_3)\ R(n-1,k)\\
 && +\ (b_1n+b_2k+b_3)\ R(n-1, k-1),
\end{eqnarray*}

for $n\geq k\geq 1$, with boundary conditions $R(0,0)>0$,
\begin{displaymath}R(n,-1)=R(n,n+1)=0, \ \text{for } n\geq 1,\end{displaymath}
\begin{displaymath}a_1\geq 0,\ a_1+a_2\geq 0,\ a_1+a_2+a_3>0,\ \text{and}\end{displaymath}
\begin{displaymath}b_1\geq 0,\ b_1+b_2\geq 0,\ b_1+b_2+b_3>0.\end{displaymath}

Then, for given $n$, the sequence $\{R(n,k)\}_{0\leq k\leq n}$ is log-concave.
\end{lemma}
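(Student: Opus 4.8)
The plan is to induct on $n$, taking as inductive hypothesis that the previous row $r_j := R(n-1,j)$ is nonnegative and log-concave, and deducing the same for $R(n,\cdot)$. Throughout I write $A_k := a_1 n + a_2 k + a_3$ and $B_k := b_1 n + b_2 k + b_3$, so that the recurrence reads $R(n,k) = A_k\, r_k + B_k\, r_{k-1}$. First I would record the nonnegativity of the multipliers. Rewriting $A_k = a_1(n-k) + (a_1+a_2)(k-1) + (a_1+a_2+a_3)$ exhibits, for $1 \le k \le n$, a sum of three nonnegative quantities (the last strictly positive), so $A_k > 0$; the identical rearrangement of $B_k$ gives $B_k > 0$. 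This is exactly where the three-fold sign hypotheses on the $a_i$ and on the $b_i$ are consumed, and it immediately yields $R(n,k)\ge 0$.

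For log-concavity I would expand $\Delta_k := R(n,k)^2 - R(n,k-1)R(n,k+1)$ directly from the recurrence. Substituting $A_{k\pm1} = A_k \pm a_2$ and $B_{k\pm1} = B_k \pm b_2$ turns every coefficient into a polynomial in $A_k, B_k, a_2, b_2$, and collecting the six products $r_k^2,\ r_{k-1}^2,\ r_{k-1}r_{k+1},\ r_{k-2}r_k,\ r_{k-1}r_k,\ r_{k-2}r_{k+1}$ yields the grouping
\begin{align*}
\Delta_k &= A_k^2\,(r_k^2 - r_{k-1}r_{k+1}) + B_k^2\,(r_{k-1}^2 - r_{k-2}r_k) + \gamma\,(r_{k-1}r_k - r_{k-2}r_{k+1})\\
&\quad + a_2^2\,r_{k-1}r_{k+1} + b_2^2\,r_{k-2}r_k + 2a_2 b_2\,r_{k-2}r_{k+1},
\end{align*}
where $\gamma := 2A_k B_k - A_{k-1}B_{k+1}$. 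The three parenthesized quantities are nonnegative by the inductive log-concavity of $r$. The last three terms (the ``remainder'') are nonnegative \emph{unconditionally}: by AM--GM, $a_2^2 r_{k-1}r_{k+1} + b_2^2 r_{k-2}r_k \ge 2|a_2 b_2|\sqrt{(r_{k-1}r_{k+1})(r_{k-2}r_k)}$, while $(r_{k-1}r_{k+1})(r_{k-2}r_k) = (r_{k-1}r_k)(r_{k-2}r_{k+1}) \ge (r_{k-2}r_{k+1})^2$ follows from $r_{k-1}r_k \ge r_{k-2}r_{k+1}$ (the standard ``spread'' consequence of log-concavity of $r$); hence the remainder is at least $2(|a_2 b_2| + a_2 b_2)\,r_{k-2}r_{k+1} \ge 0$.

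The one remaining point --- and the main obstacle --- is the sign of the multiplier $\gamma = 2A_kB_k - A_{k-1}B_{k+1} = A_k B_{k-1} + a_2 B_{k+1}$ on the middle term. When $a_2 \ge 0$, which is precisely the case in our application to $T(n,k)$ where $A_k = n+k-1$ so that $a_2 = 1$, all of $A_k, B_{k-1}, B_{k+1}$ are positive by the rearrangement above, hence $\gamma \ge 0$ and $\Delta_k \ge 0$, completing the induction. When $a_2 < 0$ the quantity $\gamma$ can genuinely be negative (one checks this already for small $n$ in the opposite-sign regime $a_2<0<b_2$), so this term-by-term certificate breaks down even though the conclusion still holds; controlling that regime is the delicate heart of Kurtz's argument, and there I would either invoke \cite{Ku} directly or reproduce its finer case analysis of the quadratic $\Delta_k$. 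Finally I would dispatch separately the extreme values of $k$ (where one of $R(n,k-1), R(n,k+1)$ vanishes and $\Delta_k \ge 0$ is automatic) and a small base case, so that since $a_2 = 1$ for the row-reversed second-order Eulerian triangle, the clean version of the argument already establishes the log-concavity we need.
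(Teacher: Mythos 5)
The paper does not actually prove this lemma: it is quoted from Kurtz \cite{Ku} and used as a black box, so there is no in-paper argument to compare yours against. Judged on its own, your computation is algebraically sound --- I checked the grouping of $\Delta_k$, the identity $\gamma = 2A_kB_k - A_{k-1}B_{k+1} = A_kB_{k-1}+a_2B_{k+1}$, and the AM--GM bound on the remainder, and the rearrangement $A_k = a_1(n-k)+(a_1+a_2)(k-1)+(a_1+a_2+a_3)$ does correctly extract $A_k>0$ for $1\le k\le n$ from the sign hypotheses. But as a proof of the lemma \emph{as stated} there is a genuine gap, and you name it yourself: the hypotheses permit $a_2<0$ (for instance $a_1=1$, $a_2=-1$, $a_3=1$ gives $a_1\ge 0$, $a_1+a_2\ge 0$, $a_1+a_2+a_3>0$), and in that regime $\gamma$ can indeed be negative (with $A_k=n-k+1$ and $B_k=k$ one gets $\gamma=-2$ at $k=1$), so your term-by-term sign certificate does not close. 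Deferring to ``Kurtz's finer case analysis'' is an IOU rather than a proof; handling that regime is precisely the content of the cited theorem that your argument does not reproduce.

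Two smaller points. The ``spread'' inequality $r_{k-1}r_k\ge r_{k-2}r_{k+1}$ is not a consequence of log-concavity and nonnegativity alone (consider the log-concave sequence $1,0,0,1$); you need the previous row to have no internal zeros, so the induction must carry strict positivity of $R(n,k)$ along with log-concavity --- this is available from $A_k,B_k>0$, but it has to be stated. Similarly, the positivity rearrangement only covers $1\le k\le n$, so $B_0$ and the definition and sign of $R(n,0)$ (the recurrence is asserted only for $k\ge 1$) require the separate boundary treatment you only gesture at. None of this undermines the paper's application: there $a_2=1$ and $b_2=-1$, so your $a_2\ge 0$ case, once the boundary details are filled in, does deliver the log-concavity of $T(n,\cdot)$ without the full strength of Kurtz's theorem.
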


\begin{theorem}
 $T(n,k)$ forms a log-concave sequence.
\end{theorem}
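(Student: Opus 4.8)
The plan is to apply Lemma~\ref{kurtz} directly to the recurrence (\ref{revEulerianrecurrence}) for $T(n,k)$ established in the preceding Proposition. The first step is to put that recurrence into the exact shape demanded by the lemma by reading off the linear coefficients. Writing
\[ T(n,k) = (n-1+k)\,T(n-1,k) + (n-k+1)\,T(n-1,k-1), \]
the multiplier of $T(n-1,k)$ is $n+k-1$ and the multiplier of $T(n-1,k-1)$ is $n-k+1$, so we are exactly in the setting of Lemma~\ref{kurtz} with $R(n,k)=T(n,k)$, $(a_1,a_2,a_3)=(1,1,-1)$, and $(b_1,b_2,b_3)=(1,-1,1)$.

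Next I would check the six sign hypotheses of the lemma against these values. Here $a_1 = 1 \ge 0$, $a_1+a_2 = 2 \ge 0$, and $a_1+a_2+a_3 = 1 > 0$, while $b_1 = 1 \ge 0$, $b_1+b_2 = 0 \ge 0$, and $b_1+b_2+b_3 = 1 > 0$; all six inequalities hold. It is worth flagging why the \emph{row-reversed} triangle $T$ is the right object: the two strict inequalities are precisely what would fail if one tried to run the same argument on the original second-order Eulerian recurrence (\ref{2ndordereulerian}), where the coefficients give $(b_1,b_2,b_3)=(2,-1,-1)$ and hence $b_1+b_2+b_3=0$. This explains the row reversal performed earlier.

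Finally I would confirm the boundary hypotheses. The natural boundary conventions give $T(n,-1)=0$ and $T(n,n+1)=0$ for $n\ge 1$, and extending the count to the empty diagram gives $T(0,0)=1>0$ (the one linear chord diagram with no chords and no LR pairs). One checks that this seed is consistent with (\ref{revEulerianrecurrence}): together with $T(m,0)=0$ for $m\ge 1$ it reproduces $T(n,1)=n\,T(n-1,1)=n!$, matching the stated boundary. With the recurrence in the required form, the sign conditions verified, and the boundary values in place, Lemma~\ref{kurtz} immediately yields that $\{T(n,k)\}_{k}$ is log-concave, and therefore unimodal.

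I do not anticipate a serious obstacle: the entire content lies in matching (\ref{revEulerianrecurrence}) to the template of Lemma~\ref{kurtz} and verifying the inequalities. The one point requiring genuine care — and the step I would double-check most carefully — is the bookkeeping around the $k$-index. Because the triangle $T$ has been re-indexed to start at $k=1$, I must ensure that the boundary value $R(0,0)=T(0,0)$ is positive and compatible with the recurrence, so that the induction underlying Lemma~\ref{kurtz} has a valid base case rather than a spurious vanishing seed.
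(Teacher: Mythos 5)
Your proposal is correct and follows exactly the paper's approach: the paper's proof consists of the single observation that $T(n,k)$ satisfies Lemma~\ref{kurtz} with $a_1=1, a_2=1, a_3=-1, b_1=1, b_2=-1, b_3=1$. Your additional verification of the six sign inequalities and the boundary conditions is sound and simply makes explicit what the paper leaves to the reader.
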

\begin{proof}
$T(n,k)$ satisfies Lemma \ref{kurtz} with $a_1=1,a_2=1,a_3=-1,b_1=1,b_2=-1,b_3=1.$
\end{proof}

\section{Summary and Future Work}

We have given two generalizations of the Narayana numbers, the number of linear chord diagrams with $n$ chords and $k$ short chords and the number of linear chord diagrams with $n$ chords and $k$ LR pairs which give the Narayana numbers when restricted to non-crossing linear chord diagrams (in the first case) and nonnesting linear chord diagrams (in the second case), but which are not themselves equidistributed.  In addition, the first generalization gives a study of the coefficients given by the polynomials in the first column of table in Figure \ref{shortchordtriangle}.  

Since the results of this paper focus on generalizing the Narayana numbers and the first column of the table in Figure \ref{shortchordtriangle}, one might wonder if there are interesting results that could be proven for the coefficients of the remaining columns in Figure \ref{shortchordtriangle}.  

Another interesting observation is that all matchings can be considered to be Fibonacci tableaux with no fixed points and all non-nesting matchings are both Fibonacci tableaux and $2 \times n$ tableaux that are counted by the Catalan numbers. Fibonacci tableaux with no fixed points correspond to permutations that are 312, 321 and 123 avoiding, thus giving a relation between matchings and pattern avoiding permutations. It would be interesting to see results involving various statistics on these objects translated to and/or from linear chord diagrams.

\acknowledgements
\label{sec:ack}
The authors would like to thank an anonymous referee whose helpful suggestions improved the paper.

\nocite{*}
\bibliographystyle{abbrvnat}
\bibliography{sample-dmtcs}
\label{sec:biblio}

\end{document}